\numberwithin{equation}{section}
\newcommand{\Z}{\mathbb{Z}}
\newcommand{\N}{\mathbb{N}}
\newcommand{\R}{\mathbb{R}}
\newcommand{\Cm}{\mathbb{C}}
\newcommand{\summ}{\sum\limits}
\newcommand{\eps}{\varepsilon}
\newcommand{\Hg}{\mathcal{H}}
\DeclareMathOperator{\supp}{supp} 
\newcommand{\F}{\mathcal{F}}
\newcommand{\pho}{\rho}
\renewcommand{\phi}{\varphi}
\newtheorem{Thm}{Theorem}[section]
\newtheorem{theorem}[Thm]{Theorem}
\newtheorem{corollary}[Thm]{Corollary}
\newtheorem{remark}[Thm]{Remark}
\begin{document}
\sloppy
\title[Fourier interpolation and time-frequency localization]
{Fourier interpolation and time-frequency localization}
\author{Aleksei Kulikov}
\address{Department of Mathematical Sciences, Norwegian University of Science and Technology, NO-7491 Trondheim, Norway
\newline {\tt lyosha.kulikov@mail.ru}
}
\begin{abstract} { We prove that under very mild conditions for any interpolation formula $f(x) = \sum_{\lambda\in \Lambda} f(\lambda)a_\lambda(x) + \sum_{\mu\in M} \hat{f}(\mu)b_{\mu}(x)$ we have a lower bound for the counting functions $n_\Lambda(R_1) + n_{M}(R_2) \ge 4R_1R_2 - C\log^{2+\eps}(4R_1R_2)$ which very closely matches interpolation formulas from \cite{MR3949027}, \cite{zeta}.
}
\end{abstract}
\maketitle
\section{Introduction}
In the recent breakthrough paper \cite{MR3949027}  Radchenko and Viazovska showed that any Schwartz function can be effectively reconstructed from the values of it and its Fourier transform at the points $\pm \sqrt{n}, n\in \Z_{\ge 0}$ and two more values $f'(0)$, $\hat{f}'(0)$.  If we consider the counting function $n_\Lambda(R) = |\Lambda \cap [-R, R]|$, which in the case $\Lambda = \{ \pm \sqrt{n}\}$ takes the form $n_\Lambda(R) = 1 + 2[R^2]$, we  see that it satisfies the inequality $n_\Lambda(W) + n_\Lambda(T) \ge 4WT - O(1)$ for all $W, T$. 
 We  observe that this bound perfectly matches the famous $4WT$ Theorem of Slepian \cite{MR0462765} which says that the space of functions which are supported on $[-T, T]$ and such that their Fourier transforms are essentially supported on $[-W, W]$ has  approximate dimension $4WT$.\footnote{Slepian called it the $2WT$ Theorem since he considered intervals $[-W, W]$ and $[-T/2, T/2]$ which is more natural from the engineering point of view.} 
We prove that this is not a coincidence and that a similar inequality holds for all such interpolation formulas with a very small error term.
\begin{theorem}\label{Theorem}\label{main}
Let $\Lambda, M\subset \R$ be two  multisets and $L$ be some fixed number. Assume that the following interpolation formula holds for all $C^\infty$ compactly supported functions $f$ 
\begin{equation}\label{formula}
f(x) = \summ_{\lambda\in \Lambda} f^{(r(\lambda))}(\lambda)a_\lambda(x) + \summ_{\mu\in M}\hat{f}^{(k(\mu))}(\mu)b_\mu(x),
\end{equation}
where $r:\Lambda \to \Z_{\ge 0}, k:M\to \Z_{\ge 0}$ and $a_\lambda, b_\mu :\R \to \Cm, \lambda \in \Lambda, \mu\in M$. Assume additionally that $k$ is at most $L$,  the counting function of $M$ satisfies the bound $n_M(R) \le R^L$ for large enough $R$ and that $b_\mu(x)$ is polynomially bounded in $\mu$ and $x$. Then for any $\eps > 0$ there exists $C > 0$ (depending on $\eps$ and the interpolation formula) such that for all $R_1, R_2 > 1$ 
\begin{equation}\label{final bound}
n_\Lambda(R_1) + n_M(R_2) \ge 4R_1R_2 - C\log^{2+\eps}(4R_1R_2).
\end{equation}
\end{theorem}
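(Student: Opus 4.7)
I would argue by contradiction via a dimension count whose analytic core is the quantitative Slepian $4WT$ theorem. Assume that
\begin{equation*}
n_\Lambda(R_1) + n_M(R_2) < 4R_1R_2 - C\log^{2+\eps}(4R_1R_2)
\end{equation*}
for some $R_1, R_2$ with $R_1R_2$ large, where $C$ will be fixed later. The plan is to produce a nonzero $f \in C^\infty_c((-R_1,R_1))$ whose Fourier transform is super-polynomially concentrated on $[-R_2,R_2]$ and which is annihilated (together with the appropriate derivatives) by every interpolation node in $\Lambda \cap [-R_1,R_1]$ and $M \cap [-R_2,R_2]$. Plugging such an $f$ into \eqref{formula} makes the $\Lambda$-sum vanish identically and collapses the $M$-sum to its far tail $|\mu| > R_2$; estimating that tail gives $\|f\|_{L^2([-R_1,R_1])} \ll \|f\|_{L^2}$, the desired contradiction.

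\textbf{Slepian subspace.} The key input is the Landau--Slepian--Pollak prolate spheroidal theory in its quantitative form: the time-frequency localization operator $\Proj_{[-R_1,R_1]}\F^{-1}\Proj_{[-R_2,R_2]}\F\Proj_{[-R_1,R_1]}$ has at least $4R_1R_2 - c_1\log(1/\delta)\log(4R_1R_2)$ eigenvalues exceeding $1-\delta$ (this is the Landau--Widom-style count on the eigenvalue transition region). Choose $\delta = \exp(-c_2\log^{1+\eps}(4R_1R_2))$; the corresponding spectral subspace $V \subset L^2([-R_1,R_1])$ then has $\dim V \ge 4R_1R_2 - c_3\log^{2+\eps}(4R_1R_2)$, and every $f \in V$ satisfies $\|(I-\Proj_{[-R_2,R_2]})\hat f\|_{L^2} \le \sqrt\delta \|f\|_{L^2}$. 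Now impose the $n_\Lambda(R_1) + n_M(R_2)$ linear vanishing conditions $f^{(r(\lambda))}(\lambda)=0$ ($\lambda\in\Lambda\cap[-R_1,R_1]$) and $\hat f^{(k(\mu))}(\mu)=0$ ($\mu\in M\cap[-R_2,R_2]$); by choosing $C > c_3$, the assumption gives $\dim V > n_\Lambda(R_1) + n_M(R_2)$, so some nonzero $f\in V$ verifies every vanishing condition. Normalize $\|f\|_{L^2}=1$. A standard mollification (inside $[-R_1+\eta,R_1-\eta]$, then smoothed by a $C^\infty$ bump of radius $\eta$) upgrades $f$ to $C^\infty_c((-R_1,R_1))$ with negligible perturbation, so that \eqref{formula} is applicable.

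\textbf{Applying the identity.} The entire $\Lambda$-sum in \eqref{formula} vanishes: for $|\lambda|>R_1$ automatically, since $f$ is compactly supported; for $\lambda \in \Lambda\cap[-R_1,R_1]$ by construction. Hence
\begin{equation*}
f(x) = \sum_{\mu \in M,\ |\mu| > R_2} \hat f^{(k(\mu))}(\mu)\, b_\mu(x).
\end{equation*}
Since $\hat f$ is entire of exponential type $\le R_1$, the Plancherel--Polya / Bernstein machinery gives the pointwise bound
\begin{equation*}
\bigabs{\hat f^{(k)}(\mu)} \lesssim R_1^{k+1/2}\, \|\hat f\|_{L^2(\mu - 1/R_1,\, \mu + 1/R_1)}
\end{equation*}
for $|\mu| > R_2 + O(1/R_1)$. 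Combined with $k(\mu)\le L$, the polynomial bound $|b_\mu(x)| \lesssim (1+|\mu|+|x|)^{O(L)}$, and $n_M(R)\le R^L$, summing over dyadic scales in $|\mu|$ bounds the right-hand side on $[-R_1,R_1]$ by $(R_1R_2)^{O(L)}\sqrt\delta$. By the choice $\delta = \exp(-c_2\log^{1+\eps}(4R_1R_2))$, this is $o(1)$ as $R_1R_2 \to \infty$, contradicting $\|f\|_{L^2}=1$.

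\textbf{Main obstacle.} The delicate step is the Slepian calibration: one must exploit the precise Landau--Widom logarithmic bound on the number of transitional eigenvalues and choose $\delta$ superpolynomially small in $\log(R_1R_2)$ so as to absorb the polynomial growth of $b_\mu$ and $n_M$ inherent in \eqref{formula}, while keeping the dimension loss at the level $\log^{2+\eps}(4R_1R_2)$. Secondarily, converting the $L^2$-smallness of $\hat f$ outside $[-R_2,R_2]$ into pointwise smallness of its derivatives $\hat f^{(k)}(\mu)$ at every tail node demands careful use of Paley--Wiener--Bernstein estimates for entire functions of exponential type, which is what makes the hypothesis $k\le L$ (together with the polynomial growth hypotheses on $b_\mu$ and $n_M$) essential.
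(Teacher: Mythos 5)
Your overall strategy (contradiction via a quantitative Slepian-type dimension count, then collapsing \eqref{formula} to the far tail of the $M$-sum) is exactly the alternative route the paper itself alludes to, but as written it has a genuine gap at the mollification step. The functions in your spectral subspace $V$ are prolate-type functions cut off at $\pm R_1$: they are restrictions of entire functions multiplied by an indicator, hence not $C^\infty$ compactly supported, and the hypothesis only provides \eqref{formula} for $C^\infty_c$ functions. Your proposed fix --- impose the vanishing conditions on $f\in V$ first and then smooth by a bump of radius $\eta$ --- does not work, because mollification destroys the exact conditions $f^{(r(\lambda))}(\lambda)=0$ and $\hat f^{(k(\mu))}(\mu)=0$, and the theorem assumes \emph{no} stability whatsoever on the $\Lambda$-side: there is no bound on $a_\lambda$, on $r(\lambda)$, or on $n_\Lambda$, so ``negligible $L^2$ perturbation'' gives no control at all on the terms $\tilde f^{(r(\lambda))}(\lambda)a_\lambda(x)$ that reappear after smoothing (and only weak polynomial control on the $M$-side, against $\sim 4R_1R_2$ perturbed conditions). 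The vanishing conditions and the exact support in $[-R_1,R_1]$ must hold \emph{exactly}, which forces you to impose them inside a space that already consists of $C^\infty_c((-R_1,R_1))$ functions with quantified Fourier concentration. This is precisely why the paper does not use the prolate eigenfunctions but instead Israel's local cosine basis: an orthonormal family of $C^\infty$ compactly supported functions $\Phi_{j,k}$ whose Fourier transforms decay like $\exp(-a_\eta(\delta_j|\xi\mp\xi_{j,k}|)^{1-\eta})$, of which at least $D-C\log^{2+\eps}(D)$ (with $D=4R_1R_2$) are well concentrated; the linear algebra and the tail estimate are then carried out directly in their span.

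Two further points are imprecise, though more easily repaired because your $\delta$ is superpolynomially small so any fixed polynomial loss is affordable. First, the eigenvalue count ``at least $4R_1R_2-c_1\log(1/\delta)\log(4R_1R_2)$ eigenvalues above $1-\delta$'' cannot be quoted from the classical Landau--Widom asymptotics, since your $\delta$ shrinks with $R_1R_2$; you need the non-asymptotic plunge-region bound (due to Israel, and carrying exactly the $\eps$-loss that produces $\log^{2+\eps}$). Second, the claimed local bound $\bigabs{\hat f^{(k)}(\mu)}\lesssim R_1^{k+1/2}\,\fixednorm{\hat f}_{L^2(\mu-1/R_1,\mu+1/R_1)}$ is not a standard Plancherel--P\'olya/Bernstein estimate: for $k\ge 1$ a derivative cannot be bounded by local values of the function alone without an interpolation argument. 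The paper handles the analogous difficulty with the Landau--Kolmogorov inequality on the half-line, combining a sup bound on the tail with a global bound on a higher derivative (via $\F(\Phi_{j,k})^{(m)}=\F((2\pi i x)^m\Phi_{j,k})$); you would need the same device, e.g.\ first a tail sup bound of the form $\fixednorm{\hat f}_{L^\infty(|\xi|>R_2+1)}\lesssim R_1^{a}\delta^{b}$ with $b>0$, then Landau--Kolmogorov for $\hat f^{(k)}$, $k\le L$, which then feeds into your dyadic summation exactly as you describe.
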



This result reflects the idea that for a function from  Slepian's theorem values of this function and its Fourier transform outside of the corresponding intervals are mostly irrelevant and to generate an $N$-dimensional vector space we need at least $N$ vectors (although the ``space'' under consideration is by no means a vector space).

The way to make the $4WT$ Theorem of Slepian precise is to consider the so-called prolate spheroidal wave functions, studied by Slepian, Landau and Pollak \cite{MR140732}, \cite{MR179392}, \cite{MR147686}. These functions are the eigenvectors of the time-frequency localization operator corresponding to the intervals $[-W, W]$ and $[-T, T]$. The key ingredient in our proof is the sharp  estimate for the corresponding eigenvalues obtained by Israel \cite{israel}. However, while we can prove Theorem \ref{main} using the prolate spheroidal wave functions, it is much more natural for us to use directly the basis functions constructed in \cite{israel} because of the decay of their Fourier transforms.

Let us emphasize that for this theorem it is not enough to assume only  uniqueness  i.e. that any function which vanishes on $\Lambda$  and whose Fourier transform vanishes on $M$ is zero. We need some quantitative assumption like an  interpolation formula or frame property. To illustrate this, let us mention a well-known result of Ascensi, Lyubarskii and Seip \cite{MR2490219}, which gives us a uniqueness result (but not an interpolation formula) with effectively half the number of points.

\begin{theorem}[Ascensi, Lyubarskii, Seip]\label{gauss}
Let $f\in L^2(\R)$. Assume that it is orthogonal to the functions $\exp(-\pi (t+\lambda)^2), \lambda \in \Lambda$ and $\exp(-\pi t^2 + 2\pi i\mu t), \mu \in M$ with $\Lambda = \{ \pm \sqrt{2n}\} $ and $M = \{ \pm \sqrt{2n}\}\cup \{ -1, 1\}$.  Then this function is identically zero.
\end{theorem}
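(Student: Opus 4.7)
The plan is to translate the orthogonality hypothesis into a vanishing statement for an entire function in the Bargmann--Fock space, and then derive a contradiction via Hadamard factorization combined with a direct Fock-norm estimate.

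First, I apply the Bargmann transform $B\colon L^2(\R)\to\mathcal B$, where $\mathcal B$ is the Bargmann--Fock space of entire functions with norm $\|F\|^2=\int_\Cm|F(z)|^2 e^{-\pi|z|^2}\,dA(z)$. Short calculations give $B(e^{-\pi(t+\lambda)^2})(z)$ and $B(e^{-\pi t^2+2\pi i\mu t})(z)$ as nonzero scalar multiples of the reproducing kernels of $\mathcal B$ at $-\lambda$ and at $-i\mu$, respectively. Hence by unitarity of $B$ and the reproducing property, the orthogonality hypothesis is equivalent to $F:=Bf$ vanishing on
\begin{equation*}
Z:=\{\pm\sqrt{2n}:n\ge 0\}\cup\{\pm i\sqrt{2n}:n\ge 0\}\cup\{\pm i\},
\end{equation*}
where the first and second pieces come from $\Lambda$ and from $\{\pm\sqrt{2n}\}\subset M$, and the last from $\{\pm 1\}\subset M$. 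Since $F\in\mathcal B$, we also have the pointwise bound $|F(z)|\le\|F\|\,e^{\pi|z|^2/2}$.

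Next I divide out the prescribed zeros using an explicit entire function. The key observation is that $\sin(\pi z^2/2)$ vanishes exactly on $\{z:z^2\in 2\Z\}=\{\pm\sqrt{2n}\}\cup\{\pm i\sqrt{2n}\}$---simply except at $z=0$, where it has a double zero. Assuming the zeros of $F$ on $Z$ are simple (higher multiplicities are handled by minor modifications of the same argument), set
\begin{equation*}
H(z):=\frac{z\,F(z)}{(1+z^2)\sin(\pi z^2/2)}.
\end{equation*}
The factor $z$ compensates for the extra vanishing of $\sin(\pi z^2/2)$ at the origin, and $(1+z^2)$ absorbs the two zeros of $F$ at $\pm i$, so $H$ is entire and nowhere vanishing. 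Using the sharp asymptotic $|\sin(\pi z^2/2)|\asymp e^{(\pi r^2/2)|\sin 2\theta|}$ away from its zeros together with the bound on $F$ gives $|H(z)|\lesssim r^{-1}\exp\!\bigl(\tfrac{\pi r^2}{2}(1-|\sin 2\theta|)\bigr)$ for $z=re^{i\theta}$, so $H$ has order at most $2$. By Hadamard's theorem, $H=e^{p}$ for a polynomial $p(z)=az^2+bz+c$; letting $r\to\infty$ in the above bound forces $\re(a e^{2i\theta})+(\pi/2)|\sin 2\theta|\le\pi/2$ for every $\theta$. Writing $a=\alpha+i\beta$ and maximizing separately on $\sin 2\theta\ge 0$ and $\sin 2\theta\le 0$ gives $\alpha^2+(\pi/2\mp\beta)^2\le\pi^2/4$; adding the two inequalities forces $\alpha=\beta=0$, so $a=0$ and
\begin{equation*}
F(z)=e^{bz+c}\,\frac{(1+z^2)\sin(\pi z^2/2)}{z}.
\end{equation*}

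Finally, I reach a contradiction by estimating the Fock norm of $F$ along rays close to the diagonal $\theta=\pi/4$. Setting $\theta=\pi/4+\eps$ for small $\eps$ and large $r$, one has $|\sin(\pi z^2/2)|^2\sim e^{\pi r^2\cos 2\eps}/4\sim e^{\pi r^2(1-2\eps^2)}/4$, and a direct computation of $\int_0^\infty|F(re^{i\theta})|^2 e^{-\pi r^2}\,r\,dr$ yields a quantity comparable to $\eps^{-4}$ for small $|\eps|$; this is not integrable in $\eps$ near $0$, so $\|F\|=\infty$, contradicting $F=Bf\in\mathcal B$. Therefore $F\equiv 0$ and $f\equiv 0$. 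The main technical subtlety is the angular balance argument that forces $a=0$: it reflects that the zero set $Z$ already sits at the critical growth threshold for $\mathcal B$, and the two extra zeros at $\pm i$ supply exactly the additional rigidity needed to rule out a nontrivial $F$.
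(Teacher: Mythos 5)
First, note that the paper does not prove Theorem~\ref{gauss} at all: it is quoted verbatim from \cite{MR2490219}, so there is no internal proof to compare against. Your overall strategy --- pass to the Bargmann--Fock picture, observe that the orthogonality conditions become vanishing of $F=Bf$ on $\{\pm\sqrt{2n}\}\cup\{\pm i\sqrt{2n}\}\cup\{\pm i\}$, compare with $(1+z^2)\sin(\pi z^2/2)/z$, and derive a contradiction from the (just barely) divergent Fock norm of that comparison function near the diagonals --- is the right one and is essentially the route taken in the source. The identification of the zero set, the computation $\int_0^\infty r^3 e^{-2\pi\eps^2 r^2}\,dr\asymp\eps^{-4}$, and the angular-balance computation forcing $a=0$ are all correct as far as they go.

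The genuine gap is the sentence asserting that $H$ is ``nowhere vanishing.'' The hypothesis only forces $F$ to vanish \emph{at least} on $Z$; nothing prevents $F$ from having further zeros (or higher multiplicities on $Z$), in which case Hadamard gives $H=e^{p}\Pi$ with $\Pi$ a genus-$2$ canonical product over the extra zeros, and your conclusion $H=e^{az^2+bz+c}$ fails. This is not a removable technicality here, because the whole configuration sits exactly at the critical threshold for $\mathcal B$: the counting function of $Z$ is $\sim 2r^2$, strictly below the critical density $\pi r^2$ of the Fock space, so Jensen's formula gives no contradiction from extra zeros; and the sectors between consecutive diagonals have opening $\pi/2$, exactly the critical aperture for Phragm\'en--Lindel\"of at order $2$, so one cannot argue that ``more zeros only helps.'' Concretely, the indicator of $H$ becomes $\re(ae^{2i\theta})+h_\Pi(\theta)$, and a nontrivial $2$-trigonometrically convex $h_\Pi$ can in principle absorb the slack in your inequality $\re(ae^{2i\theta})+\tfrac{\pi}{2}\abs{\sin 2\theta}\le\tfrac{\pi}{2}$, so the step ``adding the two inequalities forces $a=0$'' no longer closes. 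To repair the argument you need to control the extra zeros, e.g.\ by working with indicator functions of the full product $e^{p}\Pi$ (trigonometric convexity plus the boundary values $h_H(\pi/4+k\pi/2)\le 0$ obtained from $\abs{H}\lesssim 1/r$ on the diagonals), which is where the real work of \cite{MR2490219} lies. A second, fixable, imprecision: the bound $\abs{H(z)}\lesssim r^{-1}\exp(\tfrac{\pi r^2}{2}(1-\abs{\sin2\theta}))$ is only valid away from the zeros of $\sin(\pi z^2/2)$, i.e.\ away from the coordinate axes; to get order-$2$, finite-type growth of $H$ everywhere you must invoke the maximum principle on circles such as $\abs{z}^2=2n+1$, on which $\abs{\sin(\pi z^2/2)}$ is bounded below.
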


If we think of a gaussian as a smoothed version of the $\delta$-function used in formula \eqref{formula} then this result gives us a uniqueness set with half as many points as claimed in the Theorem \ref{main}. Nevertheless there is no contradiction between Theorem \ref{main} and Theorem ~\ref{gauss} because there is no efficient way to reconstruct a function $f$ from its scalar products with the functions from the Theorem \ref{gauss}. 

The usual way to express effective reconstruction is by imposing the frame property. Recall that the set of vectors $\{v_k\}$ in the Hilbert space $\Hg$ is said to be a frame if for all $f\in \Hg$ we have $||f||^2 \sim \sum_k |\langle f, v_k\rangle |^2$. In this language we can say that the set of functions from  Theorem \ref{gauss} is extremely far from being a frame (even if we only care about a lower bound for $||f||^2$ and put a fairly large weight on $|\langle f, v_k\rangle |^2$), which can be deduced from the (proof of the) general result of Seip \cite{MR1173117} or seen directly by considering the functions $f_N(t) = \exp(-\pi (t + N)^2 + 2\pi i Nt), N\to \infty$.  Theorem \ref{main} shows that a density condition akin to that of \cite{MR1173117} holds under a much weaker assumption about reconstruction than what follows from the frame property.

Let us also mention  a recent interpolation formula of Bondarenko, Radchenko and Seip \cite{zeta}. They proved that, under suitable conditions, one can recover the value $\hat{f}(x)$ by means of an interpolation formula from the values of $f$ at the points $\frac{\pm\log (n)}{4\pi}$ and the values of $\hat{f}$ at the points $\frac{\rho - 1/2}{i}$ with $\rho$ ranging over the nontrivial zeros of the Riemann zeta function. Although in the absence of the Riemann hypothesis these points can be non-real and the formula from \cite{zeta} converges only after some grouping of terms, one can still apply our techniques to their setting and get the bound
\begin{equation}
2N(T) + 2e^{4\pi W} \ge 4WT - C\log^{2+\eps}(WT),
\end{equation}
where $N(T)$ is the number of  zeros $\pho$ of the Riemann zeta function with $0 < \Im (\pho) < T$. Choosing $W = \frac{1}{4\pi}\log (\frac{T}{2\pi})$ we get the lower bound which matches the Riemann-von Mangoldt formula up to the power of the logarithmic term
\begin{equation}
N(T) \ge \frac{T}{2\pi}\log\left(\frac{T}{2\pi e}\right) - C\log^{2+\eps}(T).
\end{equation}

Finally let us remark that our result admits a natural generalization to the space of even/odd functions with $4WT$ replaced by $2WT$. This result also perfectly matches interpolation formulas from \cite{MR3949027} and \cite{zeta}.


\section{Local cosine basis}
A local cosine basis is an orthonormal basis of $L^2(I)$ associated with the Whitney decomposition of $I$ that was introduced by Coifman and Meyer in \cite{MR1089710} as a tool for smooth localization in Fourier analysis (see also \cite{MR1161254} for a nice exposition of this and related matters). In \cite{israel} Israel constructed local cosine basis functions which in addition have a strong decay of their Fourier transforms. We shall now list  properties of this basis that we will use in our proof.

Let $0 < \eta < 1$ be a fixed real number. For $D\ge 2$ let $\{ I_j\}_{j\in J}$ be the Whitney decomposition of the interval $I = [-\frac{D}{2}, \frac{D}{2}]$ and set $\delta_j = |I_j|$. There exists an orthonormal basis $\{ \Phi_{j, k}\}_{j\in J, k\in \Z_{\ge 0}}$ of $L^2(I)$ with the following properties: $\Phi_{j, k}$ are $C^{\infty}(\R)$, compactly supported functions and they satisfy the following Fourier concentration inequality (inequality $(23)$ from \cite{israel})
\begin{equation}\label{concentration}
|\F (\Phi_{j, k})(\xi)| \le \delta_j^{1/2} A_\eta \left(\exp(-a_\eta (\delta_j |\xi - \xi_{j, k}|)^{1-\eta}) + \exp(-a_\eta (\delta_j |\xi + \xi_{j, k}|)^{1-\eta})\right),
\end{equation}
where $\xi_{j,k} = \frac{2k+1}{4\delta_j}$ and $A_\eta, a_\eta$ are some positive constants.

For our purposes we need a similar bound for the derivatives of $\F(\Phi_{j, k})$. To get them we will use the following real-analytic inequality.
\begin{theorem}[Landau-Kolmogorov inequality on the half-line]
For any $n, k\in \N$, $k < n$ there exists a constant $C_{n, k} > 0$ such that for all $f\in C^{\infty}([0, +\infty))$ we have
\begin{equation}\label{Kolmogorov}
||f^{(k)}||_{L^\infty (0, +\infty)} \le C_{n, k} ||f||^{1-k/n}_{L^\infty (0, +\infty)} ||f^{(n)}||^{k/n}_{L^\infty (0, +\infty)}.
\end{equation}
\end{theorem}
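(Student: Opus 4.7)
The plan is to prove \eqref{Kolmogorov} pointwise via Taylor expansion against a free length scale $h>0$, followed by optimization in $h$. Assume both $M_0:=\|f\|_{L^\infty(0,+\infty)}$ and $M_n:=\|f^{(n)}\|_{L^\infty(0,+\infty)}$ are finite, since otherwise the inequality is trivial. Fix any $a\ge 0$; because we are on the half-line, $a+jh\ge 0$ for every $j\ge 0$ and every $h>0$, so Taylor's theorem with integral remainder gives, for $j=0,1,\dots,n-1$,
$$
f(a+jh) \;=\; \summ_{i=0}^{n-1}\frac{(jh)^i}{i!}\,f^{(i)}(a) \;+\; E_j(h), \qquad |E_j(h)|\le \frac{(jh)^n}{n!}\,M_n.
$$

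Next I would extract $f^{(k)}(a)$ by the classical finite-difference trick. Choose coefficients $\alpha_0,\dots,\alpha_{n-1}$ depending only on $n$ and $k$ by solving the linear system $\summ_{j=0}^{n-1}\alpha_j\,j^i=k!\,\delta_{i,k}$ for $i=0,\dots,n-1$; this has a unique solution since the matrix $(j^i)_{i,j=0}^{n-1}$ is Vandermonde, hence nonsingular. Multiplying the $j$-th Taylor identity above by $\alpha_j$ and summing telescopes the polynomial part, producing
$$
h^k f^{(k)}(a) \;=\; \summ_{j=0}^{n-1}\alpha_j\,f(a+jh) \;-\; \summ_{j=0}^{n-1}\alpha_j\, E_j(h),
$$
and therefore the pointwise estimate
$$
|f^{(k)}(a)| \;\le\; C'_{n,k}\bigl(h^{-k} M_0 + h^{n-k} M_n\bigr),
$$
with $C'_{n,k}$ depending only on $n$ and $k$.

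To conclude, I would optimize in $h>0$: the choice $h=(M_0/M_n)^{1/n}$ (with the degenerate cases $M_0=0$ or $M_n=0$ handled trivially) balances the two terms and gives $|f^{(k)}(a)|\le 2C'_{n,k}\,M_0^{1-k/n}M_n^{k/n}$. Since $a\ge 0$ was arbitrary, taking the supremum yields \eqref{Kolmogorov} with $C_{n,k}=2C'_{n,k}$. There is no serious obstacle here: the argument is elementary calculus and the main ``trick'' is simply the Vandermonde inversion. The one detail worth watching is that the evaluation points $a+jh$ must remain in $[0,+\infty)$, which is automatic because $a\ge 0$ and $h>0$; on a bounded interval this method would require nodes on both sides of $a$ and an interval-length-dependent constant, but on the half-line no such complication arises.
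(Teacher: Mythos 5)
Your argument is correct. The only genuinely half-line-specific point --- that all evaluation nodes $a+jh$ with $h>0$ arbitrary stay in $[0,+\infty)$, so the length scale $h$ can be optimized freely --- is exactly the point you flag, and the Vandermonde system $\summ_{j=0}^{n-1}\alpha_j j^i = k!\,\delta_{i,k}$ does produce coefficients depending only on $n,k$ that annihilate all Taylor terms except the $k$-th; together with the Lagrange/integral remainder bound and the choice $h=(M_0/M_n)^{1/n}$ this gives the stated inequality with a finite (non-sharp) constant $C_{n,k}$. Note, however, that the paper does not prove this theorem at all: it is quoted as a known result, with the proof and sharp-constant refinements deferred to the cited literature on Landau--Kolmogorov inequalities. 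So your contribution is a self-contained elementary proof by finite differences, which is entirely adequate here, since the paper only ever uses the inequality with an unspecified constant (it is applied to $\F(\Phi_{j,k})$ shifted to the half-line $\{x\ge 0\}$, and all constants are absorbed into $C_n$ in the corollary; cf.\ the remark that sharper quantitative bounds are irrelevant for the application). The classical proofs in the references aim for best constants, which your method does not give, but nothing in the paper requires them. Two cosmetic points: the cancellation you invoke is annihilation of the non-$k$ Taylor coefficients rather than a telescoping; and in the degenerate case $M_n=0$ it is cleanest to let $h\to\infty$ in your pointwise bound $|f^{(k)}(a)|\le C'_{n,k}\bigl(h^{-k}M_0+h^{n-k}M_n\bigr)$, which immediately gives $f^{(k)}\equiv 0$.
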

For the proof and various generalizations of this inequality see e.g. \cite{Kolm}.

Applying this theorem to the functions $\Phi_{j, k}$ we get the following corollary. 

\begin{corollary}
For any $n\in \N$ there exist positive constants $C_n, d_n, s_n$ (possibly also depending on $\eta$) such that for $|\xi| > \xi_{j, k}$ we have
\begin{equation}\label{sharp derivative}
|\F (\Phi_{j, k})^{(n)}(\xi)| \le C_nD^{d_n}\left(\exp(-s_n(\delta_j |\xi - \xi_{j, k}|)^{1-\eta}) + \exp(-s_n(\delta_j |\xi + \xi_{j, k}|)^{1-\eta})\right).
\end{equation}
\end{corollary}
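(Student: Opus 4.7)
The strategy is to interpolate between two bounds on $\F(\Phi_{j,k})$: the sharp Fourier concentration estimate \eqref{concentration} from \cite{israel}, and a crude high-derivative bound available because $\Phi_{j,k}$ is compactly supported. The interpolation mechanism is the Landau-Kolmogorov inequality \eqref{Kolmogorov} applied on a half-line that avoids the oscillation region $|\xi|\le \xi_{j,k}$, where \eqref{concentration} gives nothing useful.

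The crude bound is immediate: differentiating under the integral sign,
\[
\F(\Phi_{j,k})^{(N)}(\xi) = \intt_{\R}\Phi_{j,k}(x)(-2\pi ix)^N e^{-2\pi ix\xi}\,dx,
\]
and since $\supp \Phi_{j,k}\subset[-D/2,D/2]$ with $\|\Phi_{j,k}\|_{L^2}=1$, Cauchy-Schwarz yields $\|\Phi_{j,k}\|_{L^1}\le D^{1/2}$, whence $\|\F(\Phi_{j,k})^{(N)}\|_{L^\infty}\le (\pi D)^N D^{1/2}$ for every $N\in\N$.

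Now fix $\xi_0$ with $\xi_0>\xi_{j,k}$ (the case $\xi_0<-\xi_{j,k}$ being symmetric) and set $a=(\xi_0+\xi_{j,k})/2$. On the half-line $[a,\infty)$ the exponents in \eqref{concentration} are monotone decreasing, so
\[
\sup_{\xi\ge a}|\F(\Phi_{j,k})(\xi)| \le 2A_\eta\delta_j^{1/2}\exp\bigl(-a_\eta(\delta_j(\xi_0-\xi_{j,k})/2)^{1-\eta}\bigr).
\]
Applying the Landau-Kolmogorov inequality to $g(t)=\F(\Phi_{j,k})(t+a)$ on $[0,\infty)$ with $N=2n$ gives
\[
|\F(\Phi_{j,k})^{(n)}(\xi_0)|\le C_{2n,n}\bigl(2A_\eta\delta_j^{1/2}\bigr)^{1/2}\exp\bigl(-\tfrac{a_\eta}{2}(\delta_j(\xi_0-\xi_{j,k})/2)^{1-\eta}\bigr)\bigl((\pi D)^{2n}D^{1/2}\bigr)^{1/2}.
\]
Using $\delta_j\le D$ and collecting constants, this yields \eqref{sharp derivative} with $s_n = a_\eta\cdot 2^{\eta-2}$ and $d_n=n+1/2$, the prefactor being absorbed into $C_n$. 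The second exponential term in \eqref{sharp derivative} appears automatically when handling $\xi_0<-\xi_{j,k}$ by the symmetric argument on $(-\infty,-a]$.

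The only real point of care is choosing the endpoint $a$: it must be far enough from $\xi_{j,k}$ that the concentration estimate on $[a,\infty)$ still carries essentially the full strength of \eqref{concentration} at $\xi_0$ (hence the harmless factor $(\xi_0-\xi_{j,k})/2$ inside the exponent), yet it must lie below $\xi_0$ so that the Landau-Kolmogorov bound on $[a,\infty)$ actually controls $\F(\Phi_{j,k})^{(n)}$ at $\xi_0$. The midpoint is the cleanest compromise, and the resulting loss of a factor $2^{\eta-2}$ in the exponential constant, absorbed into $s_n$, is intrinsic to any Kolmogorov-type interpolation between a decay bound and a polynomial high-derivative bound.
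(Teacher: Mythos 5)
Your proposal is correct and follows essentially the same route as the paper: a crude Cauchy--Schwarz bound on $\F(\Phi_{j,k})^{(2n)}$ from compact support, the concentration estimate \eqref{concentration} for the sup on a half-line to the right of the evaluation point, and the half-line Landau--Kolmogorov inequality with $(k,n)=(n,2n)$. The only (harmless) deviation is starting the half-line at the midpoint $(\xi_0+\xi_{j,k})/2$ instead of at $\xi_0$ itself, which merely replaces the paper's constant $a_\eta/2$ by $a_\eta 2^{\eta-2}$ in the exponent; the endpoint choice $a=\xi_0$ already works since the sup-norm bound controls the derivative at the endpoint by continuity.
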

\begin{proof}
Without loss of generality we may assume that $\xi > \xi_{j, k}$ (the other case is completely analogous). Consider the function $f(x) = \F(\Phi_{j, k})(x + \xi)$. By \eqref{concentration} we have for $x\ge 0$
\begin{equation}
|f(x)| \le 2D^{1/2}A_\eta \exp(-a_\eta (\delta_j |x + \xi- \xi_{j, k}|)^{1-\eta}) \le 2D^{1/2}A_\eta \exp(-a_\eta (\delta_j | \xi- \xi_{j, k}|)^{1-\eta}) ,
\end{equation}
where we used $\delta_j \le D$ and $|\xi - \xi_{j, k}| \le|x + \xi - \xi_{j, k}| \le |x + \xi + \xi_{j, k}|$. 

On the other hand we have $\F(\Phi_{j, k})^{(m)} = \F ((2\pi i x)^m \Phi_{j, k})$. Therefore by the Cauchy--Schwarz inequality we have for all $t \in \R$
\begin{equation}
|\F(\Phi_{j, k})^{(m)}(t)| \le C_mD^{m + 1/2}.
\end{equation}
Since $f$ is just a shift of $\F(\Phi_{j, k})$ the same bound holds for $f$ as well.

Choosing $m = 2n$ and applying \eqref{Kolmogorov} to $f$ we get
\begin{equation}
|f^{(n)}(x)| \le C_{2n, n}D^{n + 1/2} \sqrt{2A_\eta C_{2n}}\exp(-\frac{a_\eta}{2}(\delta_j | \xi- \xi_{j, k}|)^{1-\eta}).
\end{equation}
Applying this for $x = 0$ and recalling that $f(x) = \F (\Phi_{j, k})(x+\xi)$ we get the desired inequality.
\end{proof}
\begin{remark}\label{rem}
Using the methods from \cite{israel} directly instead of applying the general bound \eqref{Kolmogorov}, we can get much better quantitative bounds for the constants $C_n, d_n, s_n$. But since it is irrelevant for our applications, we decided to follow this route to simplify the exposition.
\end{remark}
Let us deduce another corollary from the bound \eqref{sharp derivative} which is more convenient for our applications. 
\begin{corollary}
For all $T_1, T_2, n \ge 0$ there exist constants $C, c >0$ such that for $k < \delta_j - C\log^{1/(1-\eta)}(D)$ and $|\xi| > \frac{1}{2}$ we have
\begin{equation}\label{cor}
|\F(\Phi_{j, k})^{(n)}(\xi)| \le \frac{c}{D^{T_1}|\xi|^{T_2}}.
\end{equation}
\end{corollary}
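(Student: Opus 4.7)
The plan is to extract the bound directly from the previous corollary, inequality \eqref{sharp derivative}, by observing that the hypothesis on $k$ forces a quantitative gap between $\xi_{j,k}$ and $\tfrac12$. Since $\xi_{j,k}=(2k+1)/(4\delta_j)$, the condition $k<\delta_j-C\log^{1/(1-\eta)}(D)$ gives
\[
\tfrac{1}{2} - \xi_{j,k} = \frac{2\delta_j - (2k+1)}{4\delta_j} \ge \frac{C\log^{1/(1-\eta)}(D)}{4\delta_j}
\]
(valid once $C\ge 1$ and $D\ge 2$). Since $k\ge 0$, the hypothesis is vacuous unless $\delta_j>C\log^{1/(1-\eta)}(D)\ge 1$, so we may freely assume $\delta_j\ge 1$. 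For $|\xi|>\tfrac12>\xi_{j,k}$ we have $|\xi\pm\xi_{j,k}|\ge |\xi|-\xi_{j,k}$, so both exponentials on the right of \eqref{sharp derivative} are bounded by $\exp(-s_n(A+B)^{1-\eta})$ with
\[
A := \delta_j\bigl(|\xi|-\tfrac{1}{2}\bigr)\ge 0, \qquad B := \tfrac{C}{4}\log^{1/(1-\eta)}(D).
\]

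Next I would invoke the elementary inequality $(A+B)^{1-\eta}\ge \tfrac{1}{2}(A^{1-\eta}+B^{1-\eta})$, which follows at once from $\max(A,B)^{1-\eta}\ge \tfrac{1}{2}(A^{1-\eta}+B^{1-\eta})$ since $t\mapsto t^{1-\eta}$ is increasing. This decouples the exponential into the product
\[
\exp\bigl(-s_n(A+B)^{1-\eta}\bigr) \;\le\; D^{-\alpha C^{1-\eta}}\cdot \exp\!\bigl(-\tfrac{s_n}{2}\bigl(\delta_j(|\xi|-\tfrac{1}{2})\bigr)^{1-\eta}\bigr),
\]
with $\alpha=\alpha(s_n,\eta)>0$. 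The first factor is purely a $D$-gain purchased by the hypothesis on $k$, while the second is a stretched-exponential decay in $|\xi|$ from the hypothesis $|\xi|>\tfrac{1}{2}$.

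Finally I would choose $C$ large enough (depending on $T_1,T_2,n,\eta$) so that the first factor is at most $D^{-(T_1+d_n)}$, which together with the prefactor $C_nD^{d_n}$ from \eqref{sharp derivative} yields the desired $D^{-T_1}$ up to a constant. For the second factor: when $|\xi|\ge 1$ we have $A\ge \delta_j|\xi|/2\ge |\xi|/2$, so stretched-exponential decay dominates any polynomial and produces $|\xi|^{-T_2}$; for $\tfrac12<|\xi|<1$ the factor $|\xi|^{-T_2}\le 2^{T_2}$ is harmless and absorbed into $c$. The argument is essentially routine bookkeeping, and I anticipate no substantive obstacle — the only point worth highlighting is the additive splitting of the exponent, which cleanly separates the logarithmic gain (from the $k$-hypothesis) from the decay in $|\xi|$ (from $|\xi|>\tfrac{1}{2}$).
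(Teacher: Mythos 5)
Your proof is correct, and it uses the same basic inputs as the paper's own argument — inequality \eqref{sharp derivative}, the observation that $k<\delta_j-C\log^{1/(1-\eta)}(D)$ forces $\delta_j(\tfrac12-\xi_{j,k})\gtrsim C\log^{1/(1-\eta)}(D)$ (and that the statement is vacuous when $\delta_j<1$), and the fact that $\exp(t^{1-\eta})$ beats any polynomial — but it organizes them differently. The paper splits into the two regimes $|\xi|<D$ and $|\xi|\ge D$: in the first it discards the decay in $|\xi|$ altogether, converting the logarithmic gap into a power saving $D^{d_n-s_n(C/4)^{1-\eta}}$ and then using $|\xi|^{T_2}\le D^{T_2}$; in the second it discards the gain in $D$ and uses only the stretched-exponential decay in $|\xi|$ together with $D\le|\xi|$. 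You avoid this case split by writing $\delta_j(|\xi|-\xi_{j,k})\ge A+B$ and invoking $(A+B)^{1-\eta}\ge\tfrac12(A^{1-\eta}+B^{1-\eta})$, which factors the bound into a $D$-power gain times a stretched exponential in $|\xi|$, both retained simultaneously; the only remaining distinction ($|\xi|\ge 1$ versus $\tfrac12<|\xi|<1$) is trivial. Your route yields a slightly stronger intermediate estimate, uniform over $|\xi|>\tfrac12$, at the cost of the extra subadditivity-type step, while the paper's two-case bookkeeping is marginally more pedestrian. One shared cosmetic point: your claim that $\tfrac12-\xi_{j,k}\ge \frac{C\log^{1/(1-\eta)}(D)}{4\delta_j}$ ``once $C\ge1$ and $D\ge2$'' really requires $C\log^{1/(1-\eta)}(D)\ge1$, i.e. $C$ large depending on $\eta$ when $D$ is near $2$ — the same caveat implicit in the paper's ``$C\ge10$'' — and it is harmless since you choose $C$ large (depending on $\eta$) at the end anyway.
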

\begin{proof}
We consider two cases: $|\xi| <D$ and $|\xi| \ge D$. In the first case we have by \eqref{sharp derivative} and our assumption that $|\xi|> \frac{1}{2}$
\begin{equation}
|\F (\Phi_{j, k})^{(n)}(\xi)| \le 2C_nD^{d_n}\left(\exp(-s_n(\delta_j (\frac{1}{2} - \xi_{j, k})^{1-\eta})\right).
\end{equation}

Since $\xi_{j, k} = \frac{2k+1}{4\delta_j}$ and $k <  \delta_j - C\log^{1/(1-\eta)}(D)$ we have $\delta_j(\frac{1}{2} - \xi_{j, k}) > \frac{C}{2}\log^{1/(1-\eta)}(D) - \frac{1}{4} \ge \frac{C}{4}\log^{1/(1-\eta)}(D)$ for $C \ge 10$. Therefore we get
\begin{equation}
|\F (\Phi_{j, k})^{(n)}(\xi)| \le 2C_nD^{d_n-s_n\left(\frac{C}{4}\right)^{1/(1-\eta)}}.
\end{equation}
Choosing $C$ so that $d_n-s_n\left(\frac{C}{4}\right)^{1/(1-\eta)} < -(T_1+T_2)$ we get the desired result.

In the second case  \eqref{sharp derivative} gives us
\begin{equation}
|\F (\Phi_{j, k})^{(n)}(\xi)|  \le 2C_nD^{d_n} \exp(-s_n\delta_j(|\xi| - \xi_{j, k})^{1-\eta}).
\end{equation}
If $\delta_j < 1$ then $k < 1 - C\log^{1/(1-\eta)}(D) < 0$ and the claim is vacuously true. Therefore it is enough to consider the case $\delta_j \ge 1$. Since $D \le |\xi|$, $\xi_{j, k} \le \frac{1}{2}$ it suffices to prove that for $|\xi| \ge \frac{1}{2}$ we have
\begin{equation}
2C_n|\xi|^{d_n} \exp\left(-s_n \left(|\xi| - \frac{1}{2}\right)^{1-\eta}\right) \le \frac{c}{|\xi|^{T_1+T_2}},
\end{equation}
which is true for some constant $c$ since $\exp(t^{1-\eta})$ grows faster than any polynomial.
\end{proof}

For our purposes we will choose $\eta = 1 - \frac{1}{1+\eps}$ so that $\frac{1}{1-\eta} = 1 + \eps$. Put $J' = \{ j\mid \delta_j \ge 1\}$. Since $\{ I_j\}_{j\in J}$ is a Whitney decomposition of $I$ we have  $\sum_{j\in J'} \delta_j \ge |I| - O(1) = D-O(1)$ and there are $O(\log (|I|)) = O(\log (D))$ elements in $J'$. In the proof of Theorem \ref{main} we will work with the set $S = \{ (j, k) \mid j\in J', k < \delta_j - C\log^{1+\eps}(D)\}$. Its size is at least $\sum_{j\in J'} \delta_j - C|J'| \log^{1+\eps}(D) = D - C\log^{2+\eps}(D)$.

\section{proof of Theorem \ref{main}}
Put $D = 4R_1R_2$ and assume that $n_{\Lambda}(R_1) + n_M(R_2) < D - C\log^{2+\eps}(D)$. Consider the function $f(x) = \sum_{(j, k)\in S} a_{j, k}\Phi_{j, k}(2R_2x)$. By a linear algebra argument and the lower bound for $|S|$ we can find $a_{j, k}$ such that $f^{(r(\lambda))}(\lambda) = 0, |\lambda| \le R_1$, $\hat{f}^{(k(\mu))}(\mu) = 0, |\mu| \le R_2$ and $\sum |a_{j, k}|^2 = 1$.

Since $\Phi_{j, k}$ are supported on $[-\frac{D}{2}, \frac{D}{2}]$, the function $f$ is supported on $[-R_1, R_1]$. Since the functions $\Phi_{j, k}$ are orthonormal in $L^2(-\frac{D}{2}, \frac{D}{2})$ we have that $||f||_{L^2(\R)} = \frac{1}{\sqrt{2R_2}}$ and therefore there exists $x\in [-R_1, R_1]$ such that $|f(x)| \ge \frac{1}{\sqrt{D}}$. Consider  formula \eqref{formula} with this $x$. By construction we have $f^{(r(\lambda))}(\lambda) = 0$ for $|\lambda| \le R_1$ but since $\supp f\subset [-R_1, R_2]$ the same holds for $\lambda$ with $|\lambda| > R_1$ as well. Therefore the first half of the interpolation formula is $0$. Similarly in the second half only terms with $|\mu| > R_2$ remain. Thus we have
\begin{equation}\label{short formula}
f(x) = \summ_{\mu \in M, |\mu| > R_2} \hat{f}^{(k(\mu))}(\mu) b_\mu(x).
\end{equation}
Note that we trivially have $|a_{j, k}| \le 1$. Expanding \eqref{short formula} into $\F(\Phi_{j, k})$ and using the bound \eqref{cor} (which we can apply since after our scaling by $2R_2$, the bounds for $|x| > \frac{1}{2}$ correspond to bounds for $|x| > R_2$)
\begin{equation}
\frac{1}{\sqrt{D}} \le \summ_{\mu \in M, |\mu| > R_2} |S| (2R_2)^{-k(\mu)} \frac{c(2R_2)^{T_2}}{D^{T_1}|\mu|^{T_2}}b_\mu(x).
\end{equation}
Since $2R_2 \ge 1$  we have $(2R_2)^{-k(\mu)} \le 1$. We also have $|S| \le D$. By the assumption of the theorem we have $|b_\mu(x)| \le P(x, \mu)$ for some polynomial $P$. Since $|x| \le R_1 \le D$ and $1\le R_2 \le \mu$ we have $P(x, \mu) \le c'D^U|\mu|^U$ for some $U$. Finally since $2R_2 \le D$ we have $(2R_2)^{T_2} \le D^{T_2}$. Collecting everything  we get
\begin{equation}
\frac{1}{\sqrt{D}} \le \frac{cc'}{D^{T_1 - U -T_2- 1}}\summ_{\mu \in M, |\mu| > R_2} |\mu|^{U-T_2}.
\end{equation}

Since $n_M(R)\le R^L$ we can choose $T_2$ large enough so that the last sum converges and is bounded by some absolute constant $c''$.  Choosing $T_1 \ge U +T_2  +2$ we get
\begin{equation}
\frac{1}{\sqrt{D}} \le \frac{cc'c''}{D},
\end{equation}
which is false for large enough $D$.  For smaller $D$ we can artificially enlarge $C$ and make the conclusion of the theorem vacuously true.

If  formula \eqref{formula} is true only for even/odd functions then we put $D = 2R_1R_2$ and consider $f(x) = \sum_{(j, k)\in S} a_{j, k}\Phi_{j, k}(R_2(2x-R_1))$ for $x > 0$ and $f(-x) = \pm f(x)$. The rest is basically the same and we get the bound 
\begin{equation}
n_\Lambda(R_1) + n_M(R_2) \ge D - C\log^{2+\eps}(D) = 2R_1R_2 - C\log^{2+\eps}(2R_1R_2).
\end{equation}
\section{Concluding remerks}
We have proved that every interpolation formula satisfies a $4WT$-type theorem in a very strong form. However, we would like to point out that our methods are very robust: the only things that we need are  a sufficient number of pairwise orthogonal functions with good time-frequency localization properties and that interpolation is done by means of linear functionals. Due to that and the strong decay of the functions $\F( \Phi_{j, k})$ which is at the limit of what is allowed by the Beurling--Malliavin theorem we can significantly weaken the assumptions of the theorem as well as prove similar results in different settings (although at an expense of a worse error term). We  list below some extensions and variations which we believe may be interesting.
\begin{enumerate}[{(1)}]
\item If we use  the bound \eqref{sharp derivative} directly we can assume much less about the interpolation formula, essentially up to $|b_\mu(x)| \lesssim \exp((|x|+|\mu|)^{1-\eps})$ and $|n_M(R)| \lesssim \exp(R^{1-\eps})$. Moreover, we can even allow a slightly growing function $k(\mu)$ (see Remark \ref{rem}).

\item We can replace point evaluations (that is convolutions with the $\delta$-function) in the interpolation formula by the convolution with some other fast decaying distribution, for example the gaussian (though in that case we need to assume something about the first half of the interpolation formula as well). This gives in particular another proof of the fact that there is no interpolation formula for the functions from  Theorem \ref{gauss}.

\item We can prove a local $4WT$ Theorem for interpolation formulas. Since time-frequency shifts are isometries on $L^2(\R)$ we can consider not only intervals $[-R_1, R_1]$ and $[-R_2, R_2]$, but arbitrary pair of intervals $I, J$. In that situation we can show that the total number of points in $I$ and $J$ is at least $|I| |J| -C\log^{2+\eta}(|I| |J|)$ if $I$ and $J$ are not extremely far from the origin in comparison with their lengths.   

\item We can allow nonreal interpolation points. Since the Fourier transform of a compactly supported function is an entire function, it makes sense to speak about its value at an arbitrary complex point.  A closer examination of the methods from \cite{israel} shows that in that case the bound for the value at the point $x+iy$ is multiplied by $e^{2\pi|y|R_1}$, that is we want imaginary parts to be not very large.  Fortunately, even in the absence of the Riemann Hypothesis, this is the case for the interpolation formula from \cite{zeta} which allows us to apply our method to the setting of \cite{zeta}.
 
Moreover, in \cite{zeta} it has been proved that any interpolation formula yields  a Dirichlet series kernel satisfying a functional equation akin to that of the Riemann zeta function and conversely any interpolation formula comes from  contour integration against such a kernel. Therefore, even if the interpolation points are real, it is natural to go to the complex plane and consider a truncated contour integral, assuming in the theorem  bounds  for the kernel instead of  bounds for the interpolation functions $b_\mu(x)$.
\end{enumerate}

Finally let us mention an interesting question which requires a more substantial modification of our methods: how to generalize Theorem \ref{main} to interpolation formulas for radial functions in higher dimensions? There are recently discovered interpolation formulas in dimensions $8, 24$ \cite{unknown1} and in all dimensions larger than or equal to $5$ \cite{unknown2} which are similar to \eqref{formula}. We expect an analogous $4WT$ Theorem to be true in this case as well but the methods from \cite{israel} by themselves are not sufficient to establish this. Indeed, one of the key ideas in \cite{israel} was that multiplication by $\cos(|x|)$ on the Fourier  transform side  corresponds to the superposition of two shifts, and there is no such result in dimension higher than $1$. We will return to this problem in a forthcoming paper.

\subsection*{Acknowledgments} I would like to thank Kristian Seip for sharing his hypothesis about the potential connection between Slepian's $4WT$ Theorem and Fourier interpolation formulas. I also would like to thank Andrii Bondarenko and Danylo Radchenko for fruitful discussions. This work was supported  by Grant 275113 of the Research Council of Norway and  in part by the Moebius Contest Foundation for Young Scientists.

\bibliographystyle{siam}

\bibliography{Fourier_interpolation_and_time-frequency_localization}

\begin{thebibliography}{10}

\bibitem{MR2490219}
{\sc G.~Ascensi, Y.~Lyubarskii, and K.~Seip}, {\em Phase space distribution of
  {G}abor expansions}, Appl. Comput. Harmon. Anal., 26 (2009), pp.~277--282.

\bibitem{MR1161254}
{\sc P.~Auscher, G.~Weiss, and M.~V. Wickerhauser}, {\em Local sine and cosine
  bases of {C}oifman and {M}eyer and the construction of smooth wavelets}, in
  Wavelets, vol.~2 of Wavelet Anal. Appl., Academic Press, Boston, MA, 1992,
  pp.~237--256.

\bibitem{zeta}
{\sc A.~Bondarenko, D.~Radchenko, and K.~Seip}, {\em Fourier interpolation with
  zeros of zeta and {$L$}-functions}, arXiv:2005.02996.

\bibitem{unknown1}
{\sc H.~Cohn, A.~Kumar, S.~Miller, D.~Radchenko, and M.~Viazovska}, {\em
  Universal optimality of the {$E_8$} and {L}eech lattices and interpolation
  formulas}, arXiv:1902.05438.

\bibitem{MR1089710}
{\sc R.~R. Coifman and Y.~Meyer}, {\em Remarques sur l'analyse de {F}ourier \`a
  fen\^{e}tre}, C. R. Acad. Sci. Paris S\'{e}r. I Math., 312 (1991),
  pp.~259--261.

\bibitem{israel}
{\sc A.~Israel}, {\em The {E}igenvalue {D}istribution of {T}ime-{F}requency
  {L}ocalization {O}perators}, arXiv:1502.04404.

\bibitem{MR147686}
{\sc H.~J. Landau and H.~O. Pollak}, {\em Prolate spheroidal wave functions,
  {F}ourier analysis and uncertainty. {III}. {T}he dimension of the space of
  essentially time- and band-limited signals}, Bell System Tech. J., 41 (1962),
  pp.~1295--1336.

\bibitem{MR3949027}
{\sc D.~Radchenko and M.~Viazovska}, {\em Fourier interpolation on the real
  line}, Publ. Math. Inst. Hautes \'{E}tudes Sci., 129 (2019), pp.~51--81.

\bibitem{MR1173117}
{\sc K.~Seip}, {\em Density theorems for sampling and interpolation in the
  {B}argmann-{F}ock space. {I}}, J. Reine Angew. Math., 429 (1992),
  pp.~91--106.

\bibitem{Kolm}
{\sc A.~Shadrin}, {\em The {L}andau--{K}olmogorov inequality revisited},
  Discrete Contin. Dyn. Syst., 34 (2014), pp.~1183--1210.

\bibitem{MR179392}
{\sc D.~Slepian}, {\em Some asymptotic expansions for prolate spheroidal wave
  functions}, J. Math. and Phys., 44 (1965), pp.~99--140.

\bibitem{MR0462765}
\leavevmode\vrule height 2pt depth -1.6pt width 23pt, {\em On bandwidth}, Proc.
  IEEE, 64 (1976), pp.~292--300.

\bibitem{MR140732}
{\sc D.~Slepian and H.~O. Pollak}, {\em Prolate spheroidal wave functions,
  {F}ourier analysis and uncertainty. {I}}, Bell System Tech. J., 40 (1961),
  pp.~43--63.

\bibitem{unknown2}
{\sc M.~Stoller}, {\em Fourier interpolation from spheres}, arXiv:2002.11627.

\end{thebibliography}

\end{document}